\documentclass[reqno]{amsart}
\usepackage{amsmath,amssymb,bbm, color, amsthm}
\usepackage[backref,colorlinks]{hyperref}
\usepackage[all,cmtip]{xy}
\usepackage{verbatim}

\newtheorem{theorem}{Theorem}

\newtheorem{lemma}[theorem]{Lemma}

\newtheorem{claim}[theorem]{Claim}

\newtheoremstyle{definition}
  {4pt}
  {4pt}
  {\sl}
  {}
  {\bfseries}
  {.}
  {.5em}
  {}

\theoremstyle{definition}
\newtheorem{definition}[theorem]{Definition}

\theoremstyle{remark}
\newtheorem{remark}[theorem]{Remark}

\newtheoremstyle{introthms}
  {3pt}
  {3pt}
  {\itshape}
  {}
  {\bfseries}
  {.}
  {.5em}
  {\thmnote{#3}}
\theoremstyle{introthms}



\newcommand{\eps}{\varepsilon}

\newcommand{\cS}{\mathcal{S}}

\newcommand{\ind}{\mathrm{ind}}

\renewcommand{\l}{\ensuremath{\ell}}

\begin{document}
\title{A regularity lemma and twins in words} 
\author[M. Axenovich]{Maria Axenovich}
\address{Iowa State University, Ames, U.S.A. and Karlsruher Institut f\"ur Technologie, Karlsruhe, Germany
}
\email{maria.aksenovich@kit.edu}
\thanks{The research of the first author is supported in part by NSF  grant DMS-0901008.}

\author[Y. Person]{Yury Person}
\address{Freie Universit\"at Berlin, Institut f\"ur Mathematik, 
 Berlin, Germany} 
\email{person@math.fu-berlin.de}
\author[S. Puzynina]{Svetlana Puzynina}
\address{University of Turku,  Turku,  Finland, and Sobolev Institute of Mathematics, Novosibirsk, Russia}
\email{svepuz@utu.fi}
\thanks{The research of the third author is supported in part by grant 251371 of the Academy of Finland.} 

\date{\today}
\begin{abstract}
For a word $S$, let $f(S)$ be the largest integer $m$ such that
there are  two disjoints identical (scattered) subwords of length
$m$. Let $f(n, \Sigma) = \min \{f(S):   S  \mbox{  is of length }
n, \mbox{ over alphabet } \Sigma \}$. Here, it is shown  that
$$2f(n, \{0,1\}) = n-o(n)$$ using the regularity lemma for words.
I.e.,  any binary word of length $n$ can be split into
two identical subwords (referred to as twins) and, perhaps, a
remaining subword of length $o(n)$. A similar result is proven for $k$
identical subwords of a word over an alphabet with at most $k$
letters.
\end{abstract}

\maketitle

 \emph{Keywords}:  sequence, subword, identical subwords,  twins in sequences.

\section{Introduction}

Let $S=s_1 \ldots s_n $ be a word of length $n$, i.e., a sequence $s_1, s_2, \ldots, s_n$.   
A (scattered) {\it subword} of
$S$ is a word $S'= s_{i_1} s_{i_2} \ldots s_{i_s}$, where
$i_1<i_2<\cdots < i_s$. This notion was largely investigated in
combinatorics on words and formal languages theory with  special
attention  given to counting subword occurrences, different
complexity questions, the problem of reconstructing a word from
its subwords (see, e.g., \cite{ds2003, MS2004, salo2003}). For a
word $S$, let $f(S)$ be the largest integer $m$ such that there
are two disjoints identical subwords of $S$, each of  length $m$.
We call such subwords {\it twins}. For example, 
if $S=s_1 s_2
s_3 s_4 s_5 s_6 = 0 0 1 0 1 1$, then $S'=s_1 s_5$ and $S_2=s_4
s_6$ are two identical subwords  equal to $0 1$. The question we
are concerned with is "How large could the twins  be
in any word over a given alphabet?" One of the classical problems
related to this question   is the problem of finding longest
subsequence common to two given sequences, see for example
\cite{CLRS01, H, Xia07}. Indeed,  if we split a given word $S$
into two subwords with the same number of elements and find a
common to these two subwords  word, it would correspond to
disjoint identical subwords  in $S$. Optimizing over all
partitions gives largest twins.

Denoting   $\Sigma^{n}$  the set of  words of length $n$ over the alphabet $\Sigma$, let
 \[f(n, \Sigma) = \min \{f(S):   S \in \Sigma^n \}.\]
Observe  first, that $f(n, \{0,1\})\geq\lfloor(1/3)n\rfloor$.
Indeed, consider any $S\in \Sigma^n$ and
 split  it  into consecutive triples. Each triple has either two zeros or two ones, so we can build a subword $S_1$
by choosing a repeated element from each triple, and similarly
build a subword $S_2$ by choosing the second repeated element from
each triple. For example, if $S = 001~101~111~010$ then there are
twins  $S_1, S_2$,  each  equal to $0~1~1~0$:        $S = {\bf
0}{\color{red} 0}1~{\bf 1} 0{\color{red} 1}~{\bf 1}{\color{red}
1}1~{\bf 0}1{\color{red} 0}$,  here one word is marked bold, and
the other marked red.

In fact, we can find much larger identical subwords in any binary word.
 Our main result is
\begin{theorem}\label{thm:twins}
There exists an absolute constant $C$ such that
\[
\left(1-C\left(\frac{\log n}{\log\log n}\right)^{-1/4}\right)n \leq     2f(n, \{0,1\})\leq    n - \log n.
\]
\end{theorem}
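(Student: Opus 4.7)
The plan is to establish the lower bound via a Szemer\'edi-style regularity lemma for binary words (the paper's announced main tool), and the upper bound via a specific construction. The lower bound is the main contribution and the focus of the plan.

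First I would state and apply a regularity lemma for binary words: for each $\eps>0$ there is a bound $K=K(\eps)$ so that every $S\in\{0,1\}^n$ admits a partition into at most $K$ consecutive blocks, all but an $\eps$-fraction of which are ``$\eps$-regular'' in the sense that every sufficiently long sub-interval of a block has $1$-density within $\eps$ of the block's overall density. Next I would round each regular block's $1$-density to the nearest multiple of $\eps$, producing $\lceil 1/\eps\rceil$ density classes, and pair blocks inside each class (leaving at most one unpaired block per class). For each matched pair $(B_i,B_j)$ of regular blocks with nearly equal densities, regularity should allow one to greedily extract identical scattered subwords---one embedded in $B_i$, the other in $B_j$---each of length $(1-O(\eps))\min(|B_i|,|B_j|)$. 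Summing twin lengths over pairs and subtracting losses from irregular blocks ($\le \eps n$), unpaired blocks ($\le n/(\eps K)$), and within-pair inefficiency ($O(\eps n)$) yields a total twin length of $(1-O(\eps))n/2$. Finally, I would choose $\eps=\eps(n)$ to balance these losses against the bound $K(\eps)$ produced by the regularity lemma (likely tower-type in $1/\eps$); this balance should translate into the claimed error rate $(\log n/\log\log n)^{-1/4}$.

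For the upper bound I would exhibit a specific $S\in\{0,1\}^n$ with $f(S)\le (n-\log n)/2$. A natural route is an aperiodic or recursive construction whose block structure forces any pair of twins to miss at least $\Omega(\log n)$ characters; an alternative is a refined counting or entropy argument bounding the number of binary words admitting a twin of length exceeding $(n-\log n)/2$.

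The main obstacle will be quantitative: formulating the right notion of regularity for a word, obtaining a usable $K(\eps)$ bound, and (crucially) verifying that two similar-density regular blocks do admit identical long scattered subwords via a controlled greedy embedding. The pairing/extraction step---ensuring the twin lengths add up to $(1-o(1))n$ rather than merely $(1-\Omega(1))n$---is where most of the work lies, as a naive density-matching argument would only give a constant-factor loss.
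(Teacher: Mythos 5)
Your overall architecture for the lower bound (regularity lemma for words, extraction of twins from regular pieces, then balancing $\eps$ against the tower-type bound $K(\eps)$ to get the $(\log n/\log\log n)^{-1/4}$ rate) matches the paper. But your key extraction step is different from the paper's and has a genuine gap. You propose to \emph{pair up} regular blocks of nearly equal density and embed one twin in each member of a pair. Two problems. First, from a pair $(B_i,B_j)$ you can only harvest twins of total length about $2\min(|B_i|,|B_j|)$, so you waste $\bigl||B_i|-|B_j|\bigr|$ letters per pair; the blocks produced by the iterated refinement in the regularity lemma have widely varying lengths (anywhere from roughly $\eps^{1/\eps^4}n/t_0$ up to $\eps n$), so pairing by density alone can lose a constant fraction of $n$. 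You would also have to classify by length, and control the leftover unpaired block in each class. Second, and more to the point, the pairing is unnecessary: a single $\eps$-regular word $B$ already contains twins of total length $(1-5\eps)|B|$. Cut $B$ into $1/\eps$ consecutive sub-blocks $S_1,\dots,S_{1/\eps}$; by regularity each interior $S_i$ has about $(d-\eps)\eps|B|$ ones and $(1-d-\eps)\eps|B|$ zeros, and one takes $A=S_2(1)S_3(0)S_4(1)\cdots$ and $B'=S_3(1)S_4(0)S_5(1)\cdots$, where $S_i(q)$ denotes a fixed number of letters $q$ from $S_i$. These are disjoint (in each $S_i$ the two subwords pick complementary letter values) and spell the same word $1^a0^b1^a0^b\cdots$. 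This within-block ``shifted staircase'' is the paper's Claim and sidesteps all of your pairing bookkeeping; the twins for the whole word are then just concatenated block by block.

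The upper bound is essentially missing from your proposal, and one of your two suggested routes is a dead end: no counting or entropy argument over random binary words can work, because the lower bound of the theorem itself says \emph{every} binary word has twins of total length $n-o(n)$, so the set of words with $2f(S)>n-\log n$ is all of $\{0,1\}^n$ for large $n$... rather, a first-moment computation for $m\approx n/2$ gives an expected number of twin pairs around $\binom{n}{n/2}2^{-n/2}\gg 1$, so it cannot certify $2f(S)\le n-\log n$ for any word. The paper instead exhibits the explicit word $S=1^{3^k}0^{3^{k-1}}1^{3^{k-2}}\cdots$ with geometrically decreasing constant blocks and shows by induction on the blocks that any twins must omit at least $\log|S|$ letters; your vague mention of ``an aperiodic or recursive construction'' points in this direction but you would need to supply the construction and the inductive case analysis (the crux is that if one twin uses more of the leading block than the other, the deficit must be made up from much smaller blocks of the same letter, which are too short).
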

In the proof we shall employ a classical density increment
argument successfully applied in combinatorics and number theory,
see e.g.\ the survey of Koml\'os and Simonovits~\cite{KoSi96} and some important applications~\cite{Gow01} and~\cite{Szemeredi78}. We first show that
we can partition any word $S$ into consecutive factors  that look as
if they were random in a certain weak sense (we call them
$\eps$-regular). These $\eps$-regular words can be partitioned  (with the exception of $\eps$ proportion of letters) into two
identical subwords.  By appending these together for
every $\eps$-regular word, we eventually obtain identical subwords  of
roughly  half the length of $S$.

We generalize the notion of two identical subwords in words to a notion of
$k$ identical subwords. For a given word $S$,  let $f(S, k)$ be the largest
$m$ so that $S$ contains   $k$ pairwise 
disjoint identical subwords of length $m$ each. Finally,
let
$$f(n, k, \Sigma) = \min \{f(S, k):  ~ S\in \Sigma^n    \}.$$

\begin{theorem}\label{thm:k- tuplets-smallQ}
For any integer $k\geq 2$, and alphabet $\Sigma$, $|\Sigma|\leq
k$,
\[
\left(1-C|\Sigma|\left(\frac{\log n}{\log\log n}\right)^{-1/4}\right)n  \leq     kf(n, k, \Sigma).
\]
\end{theorem}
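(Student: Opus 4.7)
The plan is to follow the three-step template that proves Theorem~\ref{thm:twins} and to adapt each step to alphabets $\Sigma$ with $|\Sigma|\le k$ and to $k$ identical subwords instead of two. The template consists of: an $\eps$-regularity lemma that partitions the word into consecutive regular factors, a splitting claim that breaks each regular factor into $k$ identical subwords with only an $O(|\Sigma|\eps)$-fraction wasted, and a routine concatenation step.

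For the regularity step I would introduce an analogue of $\eps$-regularity appropriate for $k$-tuplets, requiring that the letter-frequency vectors over $\Sigma$ be nearly uniform across the natural subwindows used in the splitting claim. I then transpose the density-increment argument of the binary case, with the energy now a sum of $|\Sigma|$ letter-indicator contributions rather than a single one. This should produce a partition of the input word into a bounded number of consecutive $\eps$-regular factors plus an $o(n)$ leftover; the number of factors depends on $\eps$ and $|\Sigma|$ only, exactly as in the binary setting.

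The main obstacle is the splitting claim. Given an $\eps$-regular word $W$ of length $m$ over $\Sigma$, the goal is to produce $k$ pairwise disjoint identical subwords of total length at least $(1-O(|\Sigma|\eps))m$. The idea is to exploit the near-uniform letter frequencies guaranteed by regularity in order to distribute each letter evenly among the $k$ copies via a coordinated greedy matching that keeps the partial copies in lock-step, discarding only an $O(\eps)$-fraction of the occurrences of each letter. The hypothesis $|\Sigma|\le k$ is essential here because every letter of $\Sigma$ must be accommodated in each of the $k$ copies, and the $O(\eps)$ error terms, summed over letters, produce the factor of $|\Sigma|$ in the final bound.

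Finally, concatenating the $k$ identical subwords from each regular factor yields $k$ identical subwords of $S$ of total length at least $(1-C|\Sigma|\eps)n$. Choosing $\eps\sim(\log n/\log\log n)^{-1/4}$, exactly as in the binary case, then delivers the stated bound.
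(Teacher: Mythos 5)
Your overall architecture is exactly the paper's: regularity lemma, a splitting claim for each $\eps$-regular factor, concatenation, and the choice $\eps\sim(\log n/\log\log n)^{-1/4}$. (In fact the first step needs no adaptation at all: the paper's regularity lemma is already stated and proved for a general alphabet, with index $\sum_{q\in\Sigma}\sum_i d_q(S_i)^2|S_i|/n$.) The problem is that the splitting claim is the entire content of this theorem, and your description of it --- ``a coordinated greedy matching that keeps the partial copies in lock-step'' --- is a statement of the desired outcome rather than a construction. Distributing each letter's occurrences evenly among $k$ copies is easy; the difficulty is making the $k$ copies \emph{identical as ordered words} while keeping them pairwise disjoint, and nothing in the proposal explains how the lock-step is maintained or why the copies do not compete for the same occurrences of a letter.

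What the paper actually does for an $\eps$-regular word $W$ of length $m$ over $\Sigma=\{0,\dots,\ell-1\}$: cut $W$ into $1/\eps$ consecutive windows of length $\eps m$; regularity guarantees each (non-extremal) window contains at least $(d_q(W)-\eps)\eps m$ occurrences of each letter $q$. Each copy $A_j$ is a concatenation of monochromatic blocks of these fixed sizes, cycling through $0,1,\dots,\ell-1$, with $A_j$ drawing its blocks from windows $j+1,\dots,j+\ell$, then $j+k+1,\dots,j+k+\ell$, and so on. All $A_j$ are then literally the same word, and from any fixed window the $\ell$ letter classes are handed to $\ell$ \emph{distinct} copies, which gives disjointness and shows that only $O(|\Sigma|\eps m)$ letters are discarded. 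This also pinpoints where $|\Sigma|\le k$ enters, and it is not where you say: the point is not that ``every letter must be accommodated in each copy'' (true for any $k$ and $\Sigma$), but that the $|\Sigma|$ letter classes of a single window must be assigned to pairwise distinct copies, which requires $k\ge|\Sigma|$; when $k<|\Sigma|$ each window necessarily forfeits a constant fraction of its letters, which is exactly the $k/|\Sigma|$ regime of Theorem~\ref{thm:k- tuplets-largeQ}. Until you supply this (or an equivalent) explicit staggered assignment, the argument has a genuine gap.
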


 In case when $k$ is smaller than the size of the alphabet, we have the following bounds.
 \begin{theorem}\label{thm:k- tuplets-largeQ}
For any integer $k\geq 2$, and alphabet $\Sigma$, $|\Sigma|> k$,
\[
\left(\frac{k}{|\Sigma|}-C|\Sigma|\left(\frac{\log n}{\log\log n}\right)^{-1/4}\right)n \leq     kf(n, k,
\Sigma)\leq n - \max\{\alpha n,\log n\}, 
\]
where $\alpha\in[0,1/k]$ is the solution of the equation $\l^{-(k-1)\alpha} \alpha ^{-k \alpha }(1-k\alpha )^{k\alpha-1 }= 1$,
 whenever such solution exists and $0$ otherwise.
\end{theorem}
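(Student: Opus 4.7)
The plan is to prove the lower and upper bounds separately: the lower bound reduces to Theorem~\ref{thm:k- tuplets-smallQ} via a pigeonholing argument on the alphabet, while the upper bound comes from a first-moment probabilistic argument on a uniformly random word.

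For the lower bound, given $S\in\Sigma^n$ let $\Sigma'\subseteq\Sigma$ consist of the $k$ most frequent letters of $S$. Since the average frequency of a letter is $n/|\Sigma|$, the top $k$ frequencies sum to at least $kn/|\Sigma|$. Let $S'$ be the subword of $S$ obtained by retaining only the positions whose letters lie in $\Sigma'$; then $|S'|\geq kn/|\Sigma|$ and $S'\in(\Sigma')^{|S'|}$ over an alphabet of size exactly $k$. Any $k$ pairwise disjoint identical subwords of $S'$ remain pairwise disjoint and identical in $S$, so $f(S,k)\geq f(S',k)$. Applying Theorem~\ref{thm:k- tuplets-smallQ} to $S'$, and using $|S'|\geq kn/|\Sigma|$ together with the trivial $k^2/|\Sigma|\leq |\Sigma|$ to absorb the $k$ dependence, yields
\[
kf(S,k)\;\geq\;kf(S',k)\;\geq\;|S'|\left(1-Ck\left(\frac{\log|S'|}{\log\log|S'|}\right)^{-1/4}\right)\;\geq\;\left(\frac{k}{|\Sigma|}-C|\Sigma|\left(\frac{\log n}{\log\log n}\right)^{-1/4}\right)n.
\]

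For the upper bound, pick $S\in\Sigma^n$ uniformly at random and let $X_m$ count the ordered $k$-tuples of pairwise disjoint identical subwords of $S$, each of length $m$. The number of ways to choose the $k$ disjoint position sets is the multinomial $n!/((m!)^k(n-km)!)$, and, conditional on the positions, the probability that the $k$ resulting subsequences coincide is $|\Sigma|^{-(k-1)m}$ (fix the first subsequence and require each of the remaining $k-1$ to match it letter by letter). Hence
\[
\mathbb{E}[X_m]=\frac{n!}{(m!)^k(n-km)!}\,|\Sigma|^{-(k-1)m}.
\]
Writing $m=\alpha n$ and applying Stirling's formula, the exponential rate satisfies $\mathbb{E}[X_m]^{1/n}\to \alpha^{-k\alpha}(1-k\alpha)^{k\alpha-1}|\Sigma|^{-(k-1)\alpha}$, which equals $1$ precisely when $\alpha$ solves the equation in the theorem. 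Whenever a critical $\alpha\in(0,1/k)$ exists, this expectation tends to $0$ for $m$ just beyond $\alpha n$, so by Markov's inequality some word of length $n$ contains no $k$-tuple of identical disjoint subwords at that length, producing the linear bound on $kf(n,k,\Sigma)$ in the theorem. The universal $\log n$ term follows from a parallel but finer counting argument in the spirit of the binary upper bound in Theorem~\ref{thm:twins}.

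The main technical obstacle is making Stirling's approximation precise enough that $\mathbb{E}[X_m]$ drops below $1$ already for $m$ exceeding the critical $\alpha n$ by only $o(n)$, and to absorb the ensuing union bound over admissible values of $m$ into a sublinear correction. A secondary subtlety is the regime $|\Sigma|\leq k^{k/(k-1)}$, in which the probabilistic factor $|\Sigma|^{-(k-1)\alpha}$ is too weak to tame the combinatorial growth of the multinomial anywhere on $(0,1/k)$; here the critical equation admits no admissible root and the bound collapses to the logarithmic one, in accordance with the theorem's convention of setting $\alpha=0$.
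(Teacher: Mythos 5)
Your proposal is correct and follows essentially the same route as the paper: the lower bound by restricting to the subword on the $k$ most frequent letters and invoking Theorem~\ref{thm:k- tuplets-smallQ}, and the upper bound by a first-moment count of $k$-tuplets in a uniformly random word, with the rate $\l^{-(k-1)\alpha}\alpha^{-k\alpha}(1-k\alpha)^{k\alpha-1}$ and the $\log n$ term inherited from the construction in Theorem~\ref{thm:twins}. The only cosmetic difference is that the paper sidesteps Stirling asymptotics entirely by bounding the multinomial coefficient directly via $n!/((m!)^k(n-km)!)\le n^n/(m^{km}(n-km)^{n-km})$, which makes the expectation bound exact rather than a limit.
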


We shall sometimes refer to two disjoint identical subwords as {\it twins},
three disjoint identical subwords as {\it triplets},  $k$ disjoint identical subwords as $k$-{\it tuplets}.
We shall prove the regularity lemma for binary words in Section
\ref{regularity}  and will prove the Theorem~\ref{thm:twins} in
Section \ref{Proofs}.     We shall prove Theorems
\ref{thm:k- tuplets-smallQ}, \ref{thm:k- tuplets-largeQ}  in
Section~\ref{Proofs1}. We shall ignore   any
divisibility issues as these will not affect our arguments.

\section{Definitions and Regularity Lemma for Words}\label{regularity}

 First, we shall introduce some notations (for more detail, see for instance \cite {KK, Lo}).
 An \emph{alphabet} $\Sigma$ is a finite non-empty set of symbols called \emph{letters}.
  For a (scattered) {\it subword}  $S'= s_{i_1} s_{i_2}
\ldots s_{i_s}$,  of  a word $S$,     we call the set
$\{i_1, i_2, \ldots, i_s\}$ a {\it support} of $S'$ in $S$, and
write ${\rm supp}(S')$, so the length of $S'$,  $|S'|= |{\rm supp}(S')|$. 
Denoting  $I= \{i_1, \ldots, i_s\}$, we write $S' = S[I]$.   
A {\it factor} of $S$ is a subword with consecutive elements of
$S$, i.e., $s_i s_{i+1} \ldots s_{i+m}$, for some $1\leq i \leq n$
and $0\leq m \leq n-i$, we denote it $S[i, i+m]$. If $S$ is a word
over alphabet $\Sigma$ and $q\in \Sigma$, we denote $|S|_q$ the
number of elements of $S$ equal to $q$. The {\it density}  $d_q(S)
$ is defined to be $|S|_q/|S|$.

For two subwords $S'$ and $S''$ of $S$, we say that $S'$ is
contained in $S''$ if ${\rm supp}(S')\subseteq {\rm supp}(S'')$,
we also denote by $S'\cap S''$ a subword of $S$,  $S[{\rm
supp}(S')\cap {\rm supp}(S'')]$.
 If $S= s_1 \ldots s_n$ and
$S[1,i] = A$, $S[i+1, n]=B$, then we write $S=AB$ and call $S$ a
concatenation of $A$ and $B$.

 \begin{definition}[$\eps$-regular word]
Call a word $S$ of length $n$ over an alphabet $\Sigma$
$\eps$-regular if for every $i$,  $\eps n+1 \le  i\le n-2\eps n+1$ and
every $q\in \Sigma$ it holds that
\begin{equation}\label{eq:irregular}
 |d_q(S)-d_q(S[i,i+\eps n-1])|<\eps.
\end{equation}
\end{definition}

Notice that in the case $|\Sigma|=|\{0,1\}|=2$,  $d_0(S)=1-d_1(S)$ and thus
$ |d_0(S)-d_0(S[i,i+\eps n-1])|<\eps \Longleftrightarrow |d_1(S)-d_1(S[i,i+\eps n-1])|<\eps.$
When $\Sigma=\{0,1\}$, we shall denote $d(S) = d_1(S)$.

The notion of $\eps$-regular words resembles the notion of pseudorandom (quasirandom) word, see ~\cite{ChungGrahamZ92}.
However, these two notions are quite different.  A word that  consists of alternating
$0$s and $1$s is   $\eps$-regular  but not pseudorandom.  Also, unlike in the case
of stronger notions of  pseudorandomness, one can check  in  a linear time whether a word is $\eps$-regular,
cf.~\cite{ADLRY94} in the graph case.

\begin{definition}
We call  $\cS:=(S_1$, \ldots, $S_t)$ a partition of $S$ if $S= S_1S_2\ldots S_t$, ($S$ is concatenation of consecutive $S_i$s). 
A partition $\cS$  is an  $\eps$-regular partition of a word $S\in\Sigma^n$    if 
\[
\sum_{\substack{i\in[t]\\ S_i \text{ is not } \eps-\text{regular}}} |S_i|\le \eps n,
\]
\noindent i.e., the total length  of $\eps$-irregular subwords is
at most $\eps n$.
\end{definition}

The decomposition lemma we are going to show states the following: 
\begin{theorem}[Regularity Lemma for Words]\label{lem:rl_seqs}
 For every $\eps>0$ and $t_0$ there is an $n_0$ and $T_0$ such that any word $S\in \Sigma^n$, for $n\geq n_0$
admits an $\eps$-regular partition of $S$ into $S_1$, \ldots,
$S_t$ with $t_0\le t\le T_0$. In fact, $T_0\le t_03^{1/{\eps^4}}$
and $n_0 =  t_0\eps^{-\eps^{-4}}$.
\end{theorem}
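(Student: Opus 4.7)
The plan is to mimic the energy-increment proof of Szemer\'edi's regularity lemma. For a partition $\cS=(S_1,\dots,S_t)$ of $S\in\Sigma^n$, define the index
\[
\ind(\cS):=\sum_{i=1}^{t}\frac{|S_i|}{n}\sum_{q\in\Sigma}d_q(S_i)^2,
\]
which satisfies $0\le\ind(\cS)\le 1$ because $\sum_q d_q(S_i)^2\le\sum_q d_q(S_i)=1$. I would start from the partition $\cS_0$ of $S$ into $t_0$ consecutive factors of (roughly) equal length $n/t_0$, and iteratively refine $\cS_k$ to $\cS_{k+1}$ for as long as $\cS_k$ fails to be $\eps$-regular.

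The heart of the argument is a single index-increment step. If a factor $S_j$ of length $\ell_j$ is $\eps$-irregular, then by definition there is a window $T_j=S_j[i,i+\eps\ell_j-1]$ (with $\eps\ell_j+1\le i\le\ell_j-2\eps\ell_j+1$) and a letter $q^\star$ such that $|d_{q^\star}(T_j)-d_{q^\star}(S_j)|\ge\eps$. I split $S_j$ at the two endpoints of this window into three consecutive factors $A_j,T_j,C_j$ of lengths $a,\eps\ell_j,c$, where the range restriction on $i$ forces $a,c\ge\eps\ell_j$. The convexity identity
\[
\sum_{x\in\{A_j,T_j,C_j\}}|x|\bigl(d_q(x)-d_q(S_j)\bigr)^2 \;=\; \sum_{x}|x|\,d_q(x)^2 - \ell_j\,d_q(S_j)^2,
\]
applied with $q=q^\star$, lower-bounds the increase in the contribution of $S_j$ to $\ind$ by $\frac{1}{n}\cdot|T_j|\cdot\eps^2=\eps^3\ell_j/n$. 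Performing this split at every irregular $S_j$ simultaneously gives a refinement $\cS'$, and since the total length of irregular parts of a non-$\eps$-regular partition strictly exceeds $\eps n$, summing produces $\ind(\cS')-\ind(\cS)\ge\eps^3\cdot\eps=\eps^4$.

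Because $0\le\ind\le 1$, the process terminates in at most $\eps^{-4}$ rounds with an $\eps$-regular partition. Each round replaces each irregular factor by at most three new ones, so the final number of parts is at most $t_0\cdot 3^{\eps^{-4}}=T_0$, while $t\ge t_0$ is automatic since we only refine. Every factor after $k\le\eps^{-4}$ refinements has length at least $(n/t_0)\eps^{k}$; the hypothesis $n\ge n_0=t_0\eps^{-\eps^{-4}}$ keeps this $\ge 1/\eps$ throughout, so the windows $T_j$ of length $\eps\ell_j\ge 1$ remain well-defined for as long as the iteration runs.

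The main technical obstacle I anticipate is the index-increment step itself: translating a single ``bad window, bad letter'' witness of irregularity into an honest $L^2$-type energy gain. The convexity identity handles this cleanly, but the constants have to be kept sharp — the factor $|T_j|=\eps\ell_j$ combines with ``total irregular length $>\eps n$'' to give exactly the increment $\eps^4$ that produces the matching exponents in both $T_0=t_0\cdot 3^{\eps^{-4}}$ and $n_0=t_0\eps^{-\eps^{-4}}$. Consequently, the range condition $\eps\ell_j+1\le i\le\ell_j-2\eps\ell_j+1$ in the definition of $\eps$-regular is not cosmetic: it is precisely what guarantees the prefix $A_j$ and suffix $C_j$ are non-trivial, so the refinement really increases the index and the minimum-length bookkeeping leading to $n_0$ goes through.
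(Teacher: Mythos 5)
Your proposal is correct and follows essentially the same density-increment argument as the paper: the same index, the same three-way split of each irregular factor at its witnessing window, the same $\eps^3\cdot\eps=\eps^4$ gain per round, and the same bookkeeping for $T_0$ and $n_0$. The only difference is cosmetic — you certify the local index gain via the variance identity $\sum_x|x|(d_q(x)-d_q(S_j))^2=\sum_x|x|d_q(x)^2-\ell_j d_q(S_j)^2$, whereas the paper computes the increment directly by comparing against the merged density $d((A,C))$; both yield the same $\eps^3$ increment.
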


To prove the regularity lemma, we introduce the notion of an index and a refinement and prove a few basic facts.
\begin{definition}[Index of a partition]
Let $\cS:=(S_1$, \ldots, $S_t)$ be a partition of $S\in \Sigma^n$
into consecutive factors.
We define
\[
 \ind(\cS)= \sum_{q\in \Sigma }  \sum_{i\in[t]}  d_q(S_i)^2\tfrac{|S_i|}{n}.
\]
Further, for convenience we set $\ind_q(\cS)=\sum_{i\in[t]}  d_q(S_i)^2\tfrac{|S_i|}{n}$.
\end{definition}
Observe that $\ind(\cS)$ is bounded by $1$ from above.

\begin{definition}[Refinement of $\cS$]
 Let $\cS=(S_1,\ldots,S_t)$ and
 \[
\cS'=(S'_{1,1}, S'_{1,2}, \ldots, S'_{1, s_1},   \quad  S'_{2,1}, S'_{2,2}, \ldots, S'_{2, s_2}, \quad  \ldots, \quad S'_{t,1}, S'_{t,2}, \ldots, S'_{t, s_t})
\]
  be partitions of $S\in \Sigma^n$.
We say that $\cS'$ refines $\cS$ and write $\cS'\preccurlyeq\cS$,
if for every $i=1, \ldots, t$, $S_i = S'_{i,1} S'_{i,2} \cdots
S'_{i, s_i}$.
%
\end{definition}

\begin{lemma}\label{fact:indexineq}
 Let $\cS$ and  $\cS'$  be partitions of $S\in \Sigma^n$ 
If $\cS'\preccurlyeq\cS$ then
\[
 \ind(\cS')\ge\ind(\cS).
\]
\end{lemma}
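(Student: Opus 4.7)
The plan is to reduce the inequality to a blockwise statement: for each original block $S_i$ and each letter $q \in \Sigma$, the contribution of the subblocks $S'_{i,1}, \ldots, S'_{i,s_i}$ to $\ind_q(\mathcal{S}')$ is at least the contribution of $S_i$ to $\ind_q(\mathcal{S})$. Since $\ind(\mathcal{S}) = \sum_q \ind_q(\mathcal{S})$, summing these blockwise inequalities over all $i$ and $q$ immediately gives the claim.

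The blockwise inequality is a straightforward application of the Cauchy--Schwarz inequality (equivalently, convexity of $x \mapsto x^2$). First I would observe the basic identity
\[
 \sum_{j=1}^{s_i} |S'_{i,j}|_q = |S_i|_q \quad\text{and}\quad \sum_{j=1}^{s_i} |S'_{i,j}| = |S_i|,
\]
which just says that counting letters $q$ or total length in the subblocks recovers the counts in $S_i$. Then by Cauchy--Schwarz applied with weights $|S'_{i,j}|$ and values $d_q(S'_{i,j})=|S'_{i,j}|_q/|S'_{i,j}|$,
\[
 \sum_{j=1}^{s_i} |S'_{i,j}| \cdot d_q(S'_{i,j})^2 \;\ge\; \frac{\bigl(\sum_{j=1}^{s_i}|S'_{i,j}|\,d_q(S'_{i,j})\bigr)^2}{\sum_{j=1}^{s_i}|S'_{i,j}|} = \frac{|S_i|_q^2}{|S_i|} = |S_i|\,d_q(S_i)^2.
\]
Dividing by $n$ yields $\sum_{j} d_q(S'_{i,j})^2 \tfrac{|S'_{i,j}|}{n} \ge d_q(S_i)^2\tfrac{|S_i|}{n}$, which is precisely the desired blockwise inequality.

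There is no real obstacle here; the lemma is the word analogue of the classical fact in Szemer\'edi's regularity proof that refining a partition does not decrease the mean-square density index, and the only thing to be careful about is to track the weights $|S'_{i,j}|$ correctly in the Cauchy--Schwarz step (so that the denominator $\sum_j |S'_{i,j}|$ cancels against one factor of $|S_i|$ to leave the correct $|S_i|/n$ weight on the right-hand side). Summing the blockwise bound over $i \in [t]$ and over $q \in \Sigma$ gives $\ind(\mathcal{S}')\ge \ind(\mathcal{S})$, completing the proof.
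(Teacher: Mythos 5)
Your proof is correct and follows essentially the same route as the paper: the paper applies Jensen's inequality for $x\mapsto x^2$ with weights $|S'_{i,j}|/|S_i|$ blockwise and then sums over $i$ and $q$, which is exactly your weighted Cauchy--Schwarz step in equivalent form. No issues.
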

\begin{proof}
Let $\cS=(S_1,\ldots,S_t)$  and
\[
\cS'=(S'_{1,1}, S'_{1,2}, \ldots, S'_{1, s_1},   \quad  S'_{2,1}, S'_{2,2}, \ldots, S'_{2, s_2}, \quad  \ldots, \quad S'_{t,1}, S'_{t,2}, \ldots, S'_{t, s_t}).
\]
We proceed for each $q\in \Sigma$ as follows:
 \begin{align*}
   \ind_q(\cS')& =  \sum_{S'\in\cS'} d_q(S')^2\frac{|S'|}{n}\\
                   &=  \sum_{i=1}^t  \sum_{j=1}^{s_i}   d_q(S'_{i,j})^2\frac{|S'_{i,j}|}{n}\\
                  &=   \sum_{i=1}^{t}  \frac{|S_i|}{n}  \sum_{j=1}^{s_i}  d_q(S_{i,j})^2\frac{|S'_{i,j}|}{|S_i|} \\
                  & \overset{\text{Jensen's inequality}}{\ge}  \sum_{i=1}^t\frac{|S_i|}{n} \left(\sum_ {j=1}^{s_i} d_q(S'_{i,j})\frac{|S'_{i,j}|}{|S_i|}\right)^2 \\
                 & = \sum_{i=1}^t\frac{|S_i|}{n} \left(\sum_ {j=1}^{s_i} \frac{|S'_{i,j}|_q}{|S'_{i,j}|} \frac{|S_{i,j}|}{|S_i|}\right)^2\\
                  &=\sum_{i=1}^t\frac{|S_i|}{n} d_q(S_i)^2 \\
                  &=\ind_q(\cS).
\end{align*}
Now, building the sum over all $q\in \Sigma$ yields:
\[
 \ind(\cS')\ge \ind(\cS).
\]
\end{proof}
The next lemma shows that if a word $S$ is not
$\eps$-regular, then there is a  refinement of $(S)$  whose index exceeds the index of $(S)$ by at least  $\eps^3$.
\begin{lemma}\label{lem:increment}
 Let $S\in \Sigma^m$ be an $\eps$-irregular word. Then there is a  partition  $(A,B, C)$ of $S$ such that $|A|, |B|, |C| \geq \eps m$ and
\begin{equation}\label{eq:increment}
 \ind((A,B,C))\ge\ind((S))+\eps^3=\left(\sum_{q\in \Sigma} d_q(S)^2\right)+\eps^3.
\end{equation}
\end{lemma}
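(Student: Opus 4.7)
The plan is to unpack $\eps$-irregularity to locate a factor $B$ of length $\eps m$ on which some letter $q$ has density deviating from $d_q(S)$ by at least $\eps$, to take the induced three-block partition $(A,B,C)$ around $B$, and then bound the index jump by a weighted-variance computation in which only the bad letter $q$ needs to contribute.

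First I would apply the definition of an $\eps$-irregular word directly: there exist an index $i$ with $\eps m + 1 \le i \le m - 2\eps m + 1$ and a letter $q\in \Sigma$ such that $|d_q(S) - d_q(S[i, i + \eps m - 1])| \ge \eps$. Setting $A := S[1, i-1]$, $B := S[i, i + \eps m - 1]$, and $C := S[i + \eps m, m]$ gives a partition $S = ABC$, and the bounds on $i$ immediately yield $|A|, |B|, |C| \ge \eps m$.

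Next, I would evaluate $\ind((A,B,C)) - \ind((S))$ letter-by-letter. Writing $\alpha = |A|/m$, $\beta = |B|/m = \eps$, $\gamma = |C|/m$ and $a = d_r(A)$, $b = d_r(B)$, $c = d_r(C)$ for each $r\in \Sigma$, the averaging identity $d_r(S) = \alpha a + \beta b + \gamma c$ together with the elementary weighted-variance formula gives
\[
\alpha a^2 + \beta b^2 + \gamma c^2 - d_r(S)^2 \;=\; \alpha(a - d_r(S))^2 + \beta(b - d_r(S))^2 + \gamma(c - d_r(S))^2 \;\ge\; 0,
\]
so every summand of $\ind((A,B,C)) - \ind((S))$ is nonnegative.

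Finally, I would single out $r = q$: for this letter the middle term alone is at least $\beta (b - d_q(S))^2 \ge \eps \cdot \eps^2 = \eps^3$, while every other summand (other $r\in \Sigma$, and the remaining two blocks for $r = q$) contributes nonnegatively. Summing yields $\ind((A,B,C)) \ge \ind((S)) + \eps^3$, which is \eqref{eq:increment}. There is no real obstacle here beyond keeping track of lengths; the content of the lemma is exactly the observation that the excess variance contributed by the single bad letter $q$ on its bad block $B$, having mass $\eps$ and squared deviation at least $\eps^2$, already supplies the full $\eps^3$ increment.
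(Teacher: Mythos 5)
Your proof is correct and follows essentially the same route as the paper: the identical three-block partition $(A,B,C)$ around the bad window, with the $\eps^3$ index gain extracted from the deviation of $d_q$ on the middle block $B$ of relative mass $\eps$. Your weighted-variance identity is a slightly cleaner way to organize the computation than the paper's (which merges $A$ and $C$, expands the two-part index explicitly, treats the two signs of the deviation as separate cases, and invokes the refinement lemma for the remaining letters), but the underlying argument is the same.
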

\begin{proof}
 Since $S$ is not $\eps$-regular, there exists an element $q\in \Sigma$ and an $i$ with $\eps m+1\le i\le m-2\eps m+1$  such that
$|d-d(S[i,i+\eps m-1])|\ge \eps$, where $d:=d_q(S)$ and
$d(T):=d_q(T)$ for any factor $T$ of $S$. Assume w.l.o.g.\ that
$d-d(S[i,i+\eps m-1])\ge \eps$ and set $\gamma:=d-d(S[i,i+\eps
m-1])$, $A:=S[1,i-1]$, $B:=S[i,i+\eps m-1]$ and $C:=S[i+\eps
m,m]$, $a:=|A|$, $b:=|B|=\eps m$ and $c:=|C|$.

Observe further that
\[
|S|_q=d(A)a+d(B)b+d(C)c=dm,  \quad d((A,C))=\tfrac{dm-(d-\gamma)b}{a+c}, \quad    d(B)=d-\gamma.
\]
Since  $a+c = m-b$ and $\ind_q((A,B,C))=\ind_q((A,C,B))$, 
\begin{align*}
 \ind_q((A,B,C))&{\ge}  d((A,C))^2\frac{a+c}{m}+d(B)^2\frac{b}{m} \\
&= \left(\frac{dm-(d-\gamma)b}{a+c}\right)^2\frac{a+c}{m}+(d-\gamma)^2\frac{b}{m}  \\
& =   \frac{(dm-(d-\gamma)b)^2}{(m-b)m}+(d-\gamma)^2\frac{b}{m}  \\
&= \frac{1}{(m-b)m}  \left[  d^2 (m^2 -mb) + \gamma^2(mb)\right] \\
&= d^2 + \frac{\gamma^2 b}{m-b}
\geq d^2 + \frac{\eps^3 m}{(1-\eps)m}
\geq  d^2 + \eps^3.
\end{align*}
The case when $d-d(S[i,i+\eps n-1])\le -\eps$ works out similarly. Indeed, set $\gamma:=d-d(S[i,i+\eps m-1])$
as before and notice that $|\gamma|\ge\eps$ and all the computations above are exactly the same.

So, $\ind_q ((A,B,C)) \geq d_q^2  + \eps^3$. 
For all other  $q' \in \Sigma$,  Lemma~\ref{fact:indexineq} gives that $\ind_{q'} ((A,B,C)) \geq \ind_{q'}((S)) = d_{q'}^2(S)$.
Thus  $$\ind((A,B,C))=  \ind_q((A,B,C)) +\sum_{q'\in \Sigma -\{q\} }  \ind_{q'} ((A,B,C))  \geq  \sum_{q'\in \Sigma} d_{q'}(S)^2 + \eps^3.$$
\end{proof}

Finally we are in position to finish the argument.
\begin{proof}[Proof of the Regularity Lemma for Words] 
 Take $\eps>0$ and $t_0$ as given. We will give a bound on $n_0$ later.
Suppose that we have a word $S\in \Sigma^n$. Split it into $t_0$
consecutive factors $S_1$, \ldots, $S_{t_0}$ of the same length
$\tfrac{n}{t_0}$. If $\cS:=(S_1,\ldots ,S_{t_0})$ 
is not an
$\eps$-regular partition, then let  $I\subseteq [t_0]$ be the set
of all indices such that, for every $i\in I$, $S_i$ is not
$\eps$-regular (thus, $\sum_{i\in I}|S_i|\ge \eps n$). Then, by
Lemma~\ref{lem:increment}, we can refine each $S_i$, $i\in I$,
into factors $A_i$, $B_i$ and $C_i$ such that
$\ind((A_i,B_i,C_i))\ge \sum_{q\in \Sigma}d_q(S_i)^2+\eps^3$ (in
the case that~\eqref{eq:irregular} is violated for several $q\in
\Sigma $, choose an arbitrary  such $q$). We perform such
refinement for each $S_i$, $i\in I$, obtaining a partition
$\cS'\preccurlyeq\cS$, noticing that
\begin{align*}
 \ind(\cS')&=
  \sum_{q\in \Sigma}\sum_{j\in[t_0]\setminus I} d_q(S_j)^2\frac{|S_j|}{n}+
 \\ &\quad\quad\quad\quad\quad
   \sum_{q\in \Sigma }\sum_{i\in I} \left(d_q(A_i)^2\frac{|A_i|}{n}+d_q(B_i)^2\frac{|B_i|}{n}+d_q(C_i)^2\frac{|C_i|}{n}\right)  \\
  &= \sum_{q\in \Sigma}\sum_{j\in[t_0]\setminus I} d_q(S_j)^2\frac{|S_j|}{n}+ \sum_{i\in I} \ind((A_i,B_i,C_i))\frac{|S_i|}{n} \\
  &\overset{\eqref{eq:increment}}{\ge} \sum_{q\in \Sigma }\sum_{j\in[t_0]\setminus I} d_q(S_j)^2\frac{|S_j|}{n}+\sum_{i\in I}(\ind((S))+\eps^3)\frac{|S_i|}{n} \\
  &=\ind(\cS)+\eps^3\frac{\sum_{i\in I}|S_i|}{n} \\
  &\ge \ind(\cS)+\eps^4.
\end{align*}
Thus, $\cS'$ refines $\cS$ and has higher index. If $\cS'$ is not an $\eps$-regular partition of $S$, then we can repeat the procedure above by refining $\cS'$ etc.
Recall that an index of any partition $\cS$ is bounded from above by $1$.
Thus, since the increment  of the index that we get at each step is at least $\eps^4$ and each word in the partition decreases in length by a factor of  at most $\eps$ at each step, it
follows that we can perform at most $\eps^{-4}$ many steps so that the resulting factors are non-trivial,  and therefore we will eventually find an $\eps$-regular partition of $S$.
Notice that such a partition consists of at most  $3^{1/\eps^4}t_0$ words, since at each iteration each of the words is partitioned into at most $3$ new ones.
Therefore, $T_0\le 3^{1/\eps^4}t_0$ and each factor  in the partition  has  length  at least  $t_0^{-1}\eps^{1/\eps^4}n$.
\end{proof}

\section{Proof of  Theorem~\texorpdfstring{\ref{thm:twins}}{twins}.}\label{Proofs}
Before we prove our main theorem about binary words, we show a useful claim about twins in $\eps$-regular words.
\begin{claim}\label{claim:psrandom}
 If $S$ is an $\eps$-regular word, then $2f(S)\geq |S| - 5\eps|S|$.
\end{claim}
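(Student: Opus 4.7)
The plan is to chop $S$ into short consecutive factors of length $\eps n$, pair them up, and extract identical subwords from each pair. Write $n := |S|$ and $d_q := d_q(S)$, and split $S$ into $1/\eps$ blocks $B_1, \ldots, B_{1/\eps}$ of length $\eps n$ each. By the $\eps$-regularity of $S$, every block other than possibly the first and last (together accounting for at most $\eps n$ letters, the boundary slack in the definition of $\eps$-regularity) has density of each letter within $\eps$ of $d_q$, so two consecutive regular blocks $B_{2i-1}, B_{2i}$ have letter counts differing by at most $2\eps \cdot \eps n = 2\eps^2 n$ per letter.

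For each pair $(A, B) := (B_{2i-1}, B_{2i})$, I would construct identical subwords $T_1^{(i)} \subseteq A$ and $T_2^{(i)} \subseteq B$ of length at least $\eps n - O(\eps^2 n)$. The natural approach matches the $j$-th occurrence of each letter $q$ in $A$ with the $j$-th occurrence of $q$ in $B$, leaving at most $O(\eps^2 n)$ unmatched ``excess'' letters. For the matched letters to form a common subsequence of $A$ and $B$, the matching has to be order-preserving across letter types; here the stronger constraint that every sliding $\eps n$-window across the concatenation $AB$ has density close to $d_q$ would be used to control the relative positions of the letters in $A$ and in $B$ so that the natural matching yields a common subsequence of the required length.

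Concatenating across all $1/(2\eps)$ pairs produces $T_1 := T_1^{(1)} T_1^{(2)} \cdots$ and $T_2 := T_2^{(1)} T_2^{(2)} \cdots$, disjoint identical subwords of $S$ of total length at least $(1/\eps)(2\eps n - O(\eps^2 n)) - O(\eps n) = n - O(\eps n) \geq (1 - 5\eps) n$, after absorbing the boundary and within-pair losses into the constant $5$. The main obstacle is the within-pair step: two arbitrary binary words of equal length and density can have longest common subsequence as short as half their length (witness $0^m 1^m$ versus $1^m 0^m$), so the argument cannot rely on the letter counts of $A$ and $B$ alone; the full $\eps$-regularity of $S$ across the pair is what rules out these pathological configurations, and making this quantitative is the technical heart of the proof.
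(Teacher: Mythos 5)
Your proposal has a genuine gap, and you have located it yourself: the within-pair step is not proved. Matching the $j$-th occurrence of each letter $q$ in $B_{2i-1}$ with its $j$-th occurrence in $B_{2i}$ is not order-preserving in general (already for $A=0011$, $B=1100$ the matched pairs cross), so equal letter counts alone give nothing better than LCS $\approx \eps n/2$, as your own $0^m1^m$ vs.\ $1^m0^m$ example shows. You assert that the sliding-window condition across the pair ``would be used to control the relative positions,'' but this is precisely the technical content that is missing. (For what it is worth, the route is probably salvageable: regularity applied to the windows straddling the junction of $B_{2i-1}$ and $B_{2i}$ forces the prefix $1$-counts of the two blocks to agree within $2\eps|S|$ at every position, and one would then need a lemma that two equal-length binary words whose prefix counts agree within $\delta$ everywhere have a common subsequence of length at least their length minus $O(\delta)$. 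Neither the prefix-count observation nor that lemma appears in your write-up, so as it stands the claimed bound does not follow.)

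The paper avoids the LCS question entirely, and the contrast is instructive. Rather than matching block $2i-1$ against block $2i$, it extracts from \emph{every} interior block $S_i$ exactly $a=(d-\eps)\eps m$ ones and $b=(1-d-\eps)\eps m$ zeros, and builds the two twins as the \emph{same explicit word} $1^a0^b1^a0^b\cdots$ by reading these letter-classes off staggered blocks: $A=S_2(1)S_3(0)S_4(1)\cdots$ and $B=S_3(1)S_4(0)S_5(1)\cdots$. Disjointness holds because inside any one block the two twins use different letters, and equality holds by construction since only the counts $a$ and $b$ matter --- the internal order of letters within a block is never used. This is why the paper needs only the block densities guaranteed by regularity, whereas your approach additionally needs quantitative control of letter positions inside blocks.
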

\begin{proof}
Let $|S|=m$. We partition $S$ into $t= 1/\eps$ consecutive factors
$S_{1}$,\ldots, $S_{1/\eps}$, each of length $\eps m$. Since $S$
is $\eps$-regular, $|d(S_{i})-d(S)|<\eps$,   for every
$i\in\{2,\ldots, 1/\eps-1\}$. Thus each $S_i$ has at least $(d(S)-
\eps)\eps m$ occurrences of $1$s and at least $(1-d(S)-\eps)\eps m$ occurrences of $0$s.
Let $S_i(1)$ be a subword of  $S_i$ consisting of exactly $(d(S)-
\eps)\eps m$ letters $1$ and $S_i(0)$ be a subword of $S_i$ consisting
of exactly  $(1-d(S)-\eps)\eps m$ letters $0$.   Consider the following two
disjoint subwords of $S$:
$A=S_2(1)S_3(0) S_4(1) \cdots S_{t-2}(1) $ and   $B=S_3(1) S_4(0) S_5(1) \cdots S_{t-2}(0)S_{t-1}(1) $.
 When $t$ is odd, $A$ and $B$ are constructed similarly.

We see that $A$ and $B$ together have at least $m - 2\eps^2 m ( 1/\eps -3) -3 \eps m$ elements,
where $2\eps^2 m ( 1/\eps -3)$ is an upper bound on the number of $0$s and $1$s which we had to ``throw away''
to obtain \emph{exactly} $(d(S)- \eps)\eps m$ letters  $1$ and $(1-d(S)-\eps)\eps m$ letters $0$  in each $S_i$, 
 $2\eps m$   is the number of elements in   $S_1$ and $S_t$, and $\eps m$ is the upper bound on  $|S_2(0)|+|S_{t-1}(1)|$.
Thus,    $2f(S) \geq m - 5\eps m $.  This concludes the proof of the claim.
\end{proof}

Notice that we could slightly improve on $5\eps m$ above by finding in an already mentioned way twins of size $\eps m/3$ each in $S_1$ and $S_t$,
but this does not give great improvement.

\begin{proof}[Proof of  Theorem~\ref{thm:twins}]
Let $n$ be at least $n_0$, which is as asserted by the Regularity
Lemma for words
for  given $\eps>0$ and $t_0:=\lceil\tfrac{1}{\eps}\rceil$.
Furthermore, let $S$ be a binary word of length $n$. Again,
Theorem ~\ref{lem:rl_seqs} asserts an $\eps$-regular partition of
$S$ into $S_1$, \ldots, $S_t$ with $1/\eps \le t\le T_0$. We apply
Claim~\ref{claim:psrandom}  to every $\eps$-regular factor $S_i$.
Furthermore, since $S_i$s appear consecutively in
$S$, we can put the twins from each of $S_i$s together obtaining
twins for the whole word $S$. This way we see:
\[
 2f(S)\ge \sum_{\substack{i\in[t]\\ S_i \text{ is }\eps-\text{regular}}} (|S_i|-5\eps |S_i|)
\ge n-5\eps n-\eps n=n-6\eps n,
\]
here $\eps n$ corresponds to the total lengths of not $\eps$-regular factors.
Choosing $\eps=C(\frac{\log n}{\log\log n})^{-1/4}$,  and an appropriate $C$, we see that
$n \geq  \eps^{-\eps^{-4}}$.
 Therefore, by Theorem \ref{lem:rl_seqs} ~~$2f(n,\{0,1\})\ge (1-C(\log n)^{-1/4}))n.$ 
%
%
%
%

 Next we shall prove  the upper bound on $f(n,\{0,1\})$ by constructing  a binary word $S$ such that $2f(S) \leq |S| - \log |S|$.
 Let  $S= S_k S_{k-1} \ldots S_0$, where $|S_i|= 3^i$,  $S_i$ consists only of $1$s for even $i$, and
it consists only of $0$s for odd $i$s. I.e., $S$ is built of iterated $1$- or $0$-blocks
exponentially decreasing in size.  Let $A$ and $B$ be twins in $S$. \\
\noindent
Assume first that  $A$ and $B$ have the same number of elements in $S_k$.
Since $S_k$ has odd number of elements, and $A$, $B$ restricted to $S'=S_{k-1}S_{k-2} \cdots S_0$ are twins,
by induction we have that $|A|+|B| \leq (|S_k|-1) + (|S'| - \log (|S'|))  = |S| -  1 -\log(|S'|)  \leq |S| - \log |S|.$
That is true   since $|S_k| = 3^k$, $|S| = (3^{k+1}-1)/2$. \\
\noindent
Now assume, w.l.o.g.\,  that  $A$ has more elements in $S_k$ than $B$ in $S_k$.
Then $B$ has no element in $S_{k-1}$.
We have  that  $| A \cap S_{k-1} | \geq |S_{k-1}|/2$,
otherwise $|A|+|B| \leq |S| - |S_{k-1}|/2 \leq |S| - \log |S|$.
 So,  $s=|A\cap S_{k-1}| \geq |S_{k-1}|/2 \geq  3^{k-1}/2$, and
$s$ elements  of $B$ must be in $S_{k-3}\cup S_{k-5} \cdots$.
But $|S_{k-3}|+ |S_{k-5}| + \cdots   \leq 3^{k-2}/2$, a contradiction proving Theorem \ref{thm:twins}.
\end{proof}

\begin{remark}
One can find words of length $n/2-o(n)$ as described above by an
algorithm with $O(\eps^{-4}|Q|n)$ steps.
\end{remark}

\section{\texorpdfstring{$k$}{k}-tuplets over alphabet of at most \texorpdfstring{$k$}{k} letters}\label{Proofs1}

\begin{proof}[Proof of Theorem \ref{thm:k- tuplets-smallQ}]
As before, we concentrate first on $\eps$-regular words. Let  $S$
be an $\eps$-regular word of length $m$  over alphabet
$\Sigma=\{0,\ldots,{\l}-1\}$ and recall the assumption $\l\le k$. We
partition $S$ in $t= 1/\eps$ consecutive factors $S_{1}$,\ldots,
$S_{1/\eps}$, each of length $\eps m$. Since $S$ is
$\eps$-regular, $|d_q(S_{i})-d_q(S)|<\eps$,   for every
$i\in\{2,\ldots, 1/\eps-1\}$, and every $q\in \Sigma $. Thus $S_i$ has
at least $(d_q(S)- \eps)\eps m$ letters  $q$, for each $q\in \Sigma$.

We construct $k$-tuplets $A_1$, \ldots, $A_k$ as
follows. Each of $A_j$s consists of consecutive blocks, with
first block consisting  of $(d_{0}(S)- \eps)\eps m$ letters $0$,
followed by a block of  $(d_{1}(S)- \eps)\eps m$ letters $1$, \ldots,
followed by a block of  $(d_{{\l}-1}(S)- \eps)\eps m$ letters ${\l}-1$, followed
by a block of $(d_{0}(S)- \eps)\eps m$ letters $0$, and so on.

%
Since $k\ge |\Sigma|$,
we will use  all but at most
$\tfrac{1}{\eps} \eps^2m |\Sigma|+(2|\Sigma|)\eps m = 3|\Sigma| \eps m$ elements, 
where the first
summand accounts for the number of elements that we did not use
when choosing exactly $(d_{q}(S)- \eps)\eps m$ elements $q$
from each $S_i$ and each $q\in \Sigma$  and the second summand for the number of elements
in   $S_1$, \ldots, $S_{\l}$,
and from $S_{1/\eps-\l+1}$, \ldots, $S_{1/\eps}$.

Below are the examples in the special cases when $|\Sigma|=\l=k$ and when $|\Sigma|=2$ and $k=4$.

\noindent
{\it Example 1.}
\begin{align*}
A_1&=S_2(0) S_3(1) S_4(2) \cdots S_{\ell+1}(\ell-1) S_{\ell+2}(0) S_{\ell+3}(1) \cdots S_{2\ell+1} (\ell-1) \cdots, \\
 A_2&= \,\,\, S_3(0) S_4(1) S_5(2) \cdots S_{\ell+2}(\ell-1) S_{\ell+3}(0) S_{\ell+4}(1) \cdots S_{2\ell+2} (\ell-1) \cdots,  \\
\vdots\\
 A_i&= \quad S_{i+1}(0) S_{i+2}(1) S_{i+3}(2) \cdots S_{i+\ell}(\ell-1) S_{i+\ell+1}(0) S_{i+\ell+2}(1) \cdots S_{i+2\ell} (\ell-1) \cdots\\
\vdots\\
 A_{k}&= \hskip 2.4 cm S_{\l+1}(0) S_{\l+2}(1) S_{\l+3}(2) \cdots S_{2\ell}(\ell-1) S_{2\ell+1}(0)  \cdots S_{3\ell} (\ell-1) \cdots
\end{align*}

\noindent
{\it Example 2.}
\begin{align*}
A_1&=S_2(0) S_3(1) \hskip 1.6cm  S_6(0) S_7(1)   \cdots  \\
 A_2&= \hskip 0.8 cm S_3(0) S_4(1)  \hskip 1.6cm     S_7(0) S_8(1)\cdots \\
 A_3&= \hskip 1.6 cm S_{4}(0) S_{5}(1)  \hskip 1.6cm   S_8(0) S_9(1) \cdots  \\
 A_4&= \hskip 2.4 cm S_{5}(0) S_{6}(1)   \hskip 1.6cm  S_9(0) S_{10}(1) \cdots
\end{align*}

Here $S_i(j)$  is the block of $(d_j(S)-\eps)\eps m$ letters $j$ taken from $S_i$.   So, in general, the total number of elements in $A_1$,\ldots, $A_k$ is at least
$m-3|\Sigma|\eps m.$
Thus, $k f(S)\ge m-3 |\Sigma|\eps m$.

To provide the lower bound on $f(n,k, \Sigma )$ we proceed as in the
proof of Theorem \ref{thm:twins}  by first finding a regular partition of a given
word and then applying the above construction to regular factors with an appropriate choice of $\eps$.
\end{proof}

\section{Large alphabets and small \texorpdfstring{$k$}{k}-tuplets}
\begin{proof}[Proof of Theorem~\ref{thm:k- tuplets-largeQ}]
The proof of the lower bound proceeds by considering a scattered word $W$ consisting of the $k$ most frequent letters.
Clearly, $|W|\ge \tfrac{k}{|\Sigma|}n$, which together with Theorem~\ref{thm:k- tuplets-smallQ} yields the lower bound.

The upper bound we obtain is either immediate from Theorem~\ref{thm:twins} or from computing the expected number of $k$-tuplets of length $m$ each
in a random word of length $n$ over an alphabet $\Sigma$ of size $\l$. If the expectation if less than $1$, this
means that there is a word $S$ with $f(S,k)<m$. Indeed, there are
\[
 \frac{1}{k!} \prod_{i=0}^{k-1} \binom{n-im}{m}
\]
distinct  sets  of $k$  disjoint subwords each of length $m$  in a word of length $n$.
The probability that  such a set corresponds  to a   $k$-tuplet,  when each letter is chosen with probability $1/\l$ independently,
is $\l^{(1-k)m}$.
Thus, the expected number of  $k$- tuplets  is at most
\begin{multline*}
 \l^{(1-k)m}\prod_{i=0}^{k-1} \binom{n-im}{m}=\l^{-(k-1)m} \frac{n!}{(m!)^k(n-km)!}\le \l^{-(k-1)m}\frac{n^n}{m^{km}(n-km)^{n-km}},
\end{multline*}
that is, for $m=\alpha n$, is at most
\[
 \l^{-(k-1)\alpha n}\frac{n^n}{(\alpha n)^{k\alpha n }(n-k\alpha n)^{n-k\alpha n}}=\left(\l^{-(k-1)\alpha} \alpha ^{-k\alpha }(1-k\alpha )^{k\alpha-1 }\right)^n.
\]
Thus, if $\l^{-(k-1)\alpha}    \alpha ^{-k\alpha }   (1-k\alpha )^{k\alpha-1 }$ is less than $1$ then $f(S,k)\le \alpha n$.
In particular, for $k=2$ and $\l=5$ one can compute that $\alpha<0.49$.
\end{proof}

\section{Concluding Remarks}
\subsection{Small values of \texorpdfstring{$f(n,k,\Sigma)$}{f(n,k,Sigma)}}
\begin{table}[h]
\begin{tabular}{|c|c|c|c|c|c|c|c|c|c|c|c|c|}
\hline
$\Sigma$\textbackslash $n$ &$6$&$7$&$8$&$9$&$10$&$11$&$12$&$13$&$14$&$15$&$16$&$17$ \\\hline
$\{0,1\}$        &$2$&$2$&$2$&$3$&$3$ &$ 4$&$ 4$&$ 5$&$ 5$&$ 5$& $6$   & $6$       \\\hline
$\{0,1,2\}$      & $1$  &$1$   &$2$   &$2$   &$2$    &$3$    &$ 3$&$ 3$&$ 4$&$ 4$&$ 4$&$ 4$ \\
\hline
\end{tabular}
\end{table}
\begin{table}[h]
\begin{tabular}{|c|c|c|c|c|c|c|c| }
\hline
$\Sigma$\textbackslash $n$ & $18$&$19$&$20$&$21$&$22$&$23$&$24$\\\hline
$\{0,1\}$            &$7$    &$7$    &$ 8$&    &    &    &    \\\hline
$\{0,1,2\}$      &$ \leq 5$ &$\le6$&$\le6$&$\le 7$&$\le 7$&${\color{red} \le 8}$&$\le8$\\
\hline
\end{tabular}
\caption{Values for small $t$ of $f(t,2,2)$ and $f(t,2,3)$.}
\end{table}

We will slightly abuse notation and denote by $f(n,k,\l)$ the value of $f(n,k,\Sigma)$ with $|\Sigma|=\l$.
In the introductory section it was observed that $f(3,2,2)=1$ yielding immediately a weak lower bound on $f(n,2,2)$ to be $\lfloor n/3\rfloor$.
In general, it holds clearly
\[
 f(n,k,\l)\ge \left\lfloor\tfrac{n}{m}\right\rfloor f(m,k,\l).
\]
For example, we determined (Theorem~\ref{thm:k- tuplets-largeQ}) a lower bound on $f(n,2,3)$ to be $\tfrac{1}{3}n-o(n)$.
We do not know whether it is tight and, more sadly, whether one can achieve it, without $o(n)$ term, by finding a (reasonable) number $t$
such that $f(t,2,3)\ge \frac{t}{3}$. If one could find such $t$ this would immediately give another proof of $f(n,2,3)\ge\tfrac{1}{3}n-t$.
However, the smallest value for such possible $t$ could be $21$, which already presents a computationally challenging task. In the tables above we summarize estimates on the values on $f(n,k,\l)$,
which were determined with the help of a computer. Thus, the first ``open'' case which might improve
lower bound on $f(n,2,3)$ is $f(22,2,3)$.
\subsection{Improving the \texorpdfstring{$O\left(|\Sigma|\left(\frac{\log\log n}{\log n}\right)^{1/4}\right)n$}{o(n)} term}
Further we remark, that a more careful analysis below of the increment argument in the proof of Theorem~\ref{lem:rl_seqs} leads to the bound $T_0\le t_0 3^{(-2\log \eps)/\eps ^3}$, which in turn improves the
bounds in Theorems~\ref{thm:twins} and~\ref{thm:k- tuplets-smallQ} to
\[
\left(1-C|\Sigma|\left(\frac{(\log\log n)^2}{\log n}\right)^{1/3}\right)n  \leq     kf(n, k, \Sigma).
\]

Recall that in the proof of Theorem~\ref{lem:rl_seqs} we set up an index and refining a corresponding partition 
each time we increase it by at least $\eps^4$. Let's reconsider $j$th refinement step at which
the partition $\cS=(S_1,\ldots,S_{t_0})$ is to be refined. Further recall that $I$ consists of the indices  $i$ such that
$S_i$ is not $\eps$-regular. 
 Let $\alpha_j$ be such that 
\begin{equation}\label{eq:alpha}
\sum_{i\in I} |S_i|=\alpha_j n.
\end{equation}
In the original proof we iterate as long as $\alpha_j\ge \eps$ holds. And by peforming an iteration step we 
merely use the fact that $\alpha_j\ge \eps$ which leads to $\eps^4$ increase of the index during one iteration step. 
Recall that $\ind(\cS)$ was defined as follows:
\[
\ind(\cS)=  \sum_{q\in \Sigma }\sum_{j\in[|\cS|]} d_q(S_j)^2\frac{|S_j|}{n},
\]
and for each further refinement $\cS'\preccurlyeq\cS$ it holds:
\begin{equation}\label{eq:newbound_ind}
\ind(\cS)\le \ind(\cS')=\frac{(1-\alpha_j)n}{n}\ind(\cS_1)+\frac{\alpha_j n}{n}\ind(\cS_2) \le \sum_{q\in \Sigma }\sum_{j\in[|\cS|]\setminus I} d_q(S_j)^2\frac{|S_j|}{n}+\alpha_j,
\end{equation}
where $\cS_1$ consists of $\eps$-regular words from $\cS$(these words are not partitioned/refined anymore)
and $\cS_2$ consists of not $\eps$-regular words from $\cS$ (and their lengths sum up to $\alpha_j n$).

Let $\l$ be the total number of iteration steps until we arrive at an $\eps$-regular partition. Let 
$\alpha_1$, \ldots, $\alpha_{\l}$ be the numbers, where $\alpha_j n$ is the sum over the lengths of  not $\eps$-regular 
words in the partition at step $j$, $j\in[\l]$ (cf.\eqref{eq:alpha}). 

By the discussion above
\[
1\ge \alpha_1\ge \alpha_2\ge \ldots\ge \alpha_{\l}\ge \eps.
\]
Next, we partition $(\eps,1]$ into $\log_2 \tfrac{1}{\eps}$ consecutive intervals $(y_{i+1},y_i]$ where $y_1=1$ and 
$y_{i+1}=y_i/2$. We claim that 
each interval $(y_{i+1},y_i]$ contains at most $\frac{2}{\eps^3}$ $\alpha_j$s. Indeed, the increase of the index during step $j$ where $\alpha_j\in(y_{i+1},y_i]$ is at least 
\[
\alpha_j \eps^3>y_{i+1}\eps^3.
\]
Further, let $j'$ be the smallest index such that $\alpha_{j'}\le y_i$ and $j''$ be the largest 
index such that  $\alpha_{j''}>y_{i+1}$. Let $\ind_j$ be the index before the $j$th refinement step.
Then by~\eqref{eq:newbound_ind} the following holds for $j'+1\le j\le j''$:
\[
\ind_{j'+1}\le \ind_{j}\le \ind_{j''}\le \ind_{j'+1}+y_i.
\]
This implies that the number of $\alpha_j$s in the interval $(y_{i+1},y_i]$ cannot be bigger than 
\[
\frac{y_i}{y_{i+1}\eps^3}=\frac{2}{\eps^3}.
\]
Thus, we obtain the following upper bound on $\l$ 
\[
\l\le \frac{2\log_2 \tfrac{1}{\eps}}{\eps^3},
\]
which leads to $T_0\le t_0 3^{(-2\log \eps)/\eps ^3}$, $n_0=t_0\eps^{-(2\log 1/\eps)/\eps^3}$ and thus we can regularize with 
$\eps=\left(\frac{(\log\log n)^2}{\log n}\right)^{1/3}$. 

\section*{Acknowledgements}  The authors would like to thank Sergey Avgustinovich  for fruitful discussions.


\end{document}